\newtheorem{thm}{Theorem}[section]
\newtheorem{cor}[thm]{Corollary}
\newtheorem{lem}[thm]{Lemma}
\theoremstyle{definition}
\theoremstyle{remark}
\numberwithin{equation}{section}
\newcommand{\Z}{\mathbb{Z}}
\newcommand{\Aut}{\mathrm{Aut}}
\newcommand{\Inn}{\mathrm{Inn}}
\newcommand{\ex}{\mathrm{exp}}
\newcommand{\Fr}{\Phi}
\newcommand{\ce}{\mathcal{C}}
\newcommand {\la}{\langle}
\newcommand {\ra}{\rangle}
\begin{document}

\title[Noninner automorphisms of order $p$]
{Noninner automorphisms of order $p$ in finite $p$-groups of coclass 2,  when $p>2$}%
\author[ S. Fouladi and R. Orfi ]{ S. Fouladi and R. orfi*}%

 \address{Faculty of Mathematical Sciences and Computer, Kharazmi University,
50 Taleghani Ave., Tehran 1561836314, Iran}
 \email{ s\_fouladi@khu.ac.ir}
 \address{ Department of Mathematics,  Faculty of Science, Arak University, Arak 38156-8-8349, Iran.}
 \email{ r-orfi@araku.ac.ir}

\subjclass[2000]{20D45, 20D15}%
\keywords{automorphisms of $p$-groups, $p$-groups of coclass 2, noninner automorphisms, derivations}%
\thanks {*Corresponding author: r-orfi@araku.ac.ir}
\date{\today}
%\dedicatory{}%
%\commby{}%
% ----------------------------------------------------------------
\begin{abstract}It is shown that if $G$ is a finite $p$-group of coclass 2 with $p>2$, then 
$G$ has a noninner automorphism of order $p$.
\end{abstract}

\maketitle
% ----------------------------------------------------------------
\section{\bf{Introduction}}

\vspace*{0.4cm} 
Let $G$ be a finite non-abelian $p$-group. A longstanding conjecture asserts that $G$ possesses at least one noninner automorphism of order $p$ (see \cite[Problem 4.13]{Mazur}). This is a sharpened version of a celebrated theorem of Gasch\"{u}tz \cite{Gasch} which states that finite non-abelian $p$-groups have noninner automorphisms of $p$-power order. By a result of Deaconescu and Silberberg  
\cite{Deacon} if a $p$-group $G$ satisfies $\ce_G(Z(\Fr(G)))\neq \Fr(G)$, then $G$ admits a noninner automorphism of order $p$ leaving $\Fr(G)$ elementwise fixed. However the conjecture is still open, various attempts have been made to find noninner automorphisms of order $p$ in 
some classes of finite $p$-groups (see 
 \cite{Abdol3},  \cite{Bodna}, \cite{Deacon}, \cite{Jamal}, \cite{Schmid} and \cite{Shabani}). In particular the conjecture has been proved for finite $p$-groups of class 2, class 3 and of maximal class (see \cite{Lieb}, 
\cite{Abdol1}, \cite{Abdol2} and \cite[Corollary 2.7]{Shabani}). In this paper we show the validity of the conjecture 
when $G$ is a finite $p$-group of coclass 2 with $p>2$  (see Theorem \ref{7}). By the nilpotency coclass 
of a $p$-group of order $p^n$, we mean the number $n-c$, where $c$ is the nilpotency class of $G$. 

Throughout this paper the following notation is
used. $\Aut^{N}(G)$ denotes the group of all
automorphisms of $G$ normalizing $N$ and centralizing $G/N$, and $\Aut_{N}(G)$ denotes the group of all
automorphisms of $G$ centralizing $N$. Moreover $\Aut_{N}^{N}(G)=\Aut^{N}(G) \cap \Aut_{N}(G)$. 
All central automorphisms of $G$ is denoted by $\Aut_c(G)$. the terms of the upper central 
series of $G$ is denoted by $Z_i=Z_{i}(G)$.  The group of all derivations from $G/N$ to $Z(N)$ is denoted by $Z^{1}(G/N,Z(N))$, where $N$ is a normal subgroup of a group $G$ and $G/N$ acts on $Z(N)$ as 
$a^{Ng}=a^g$ for all $a\in Z(N)$ and $g\in G$. We use the notation $x\equiv y$ $\pmod H$
to indicate that $Hx=Hy$, where $H$ is a subgroup of a group $G$ and $x, y \in G$. 
The minimal number of generators of $G$ is denoted by $d(G)$ and $C_n$ is the cyclic group of order
$n$. All unexplained notation is standard. Also a nonabelian group $G$ that
has no nontrivial abelian direct factor is said to be purely
nonabelian.

\vspace*{0.4cm}
%---------------------------------------------------------------------------------------------------------------------------------

\section{\bf{The main result}}

\vspace*{0.4cm} In this section we aim to prove that if $G$ is a $p$-group of order $p^n$ $ (p>2)$ and
 coclass $2$, then $G$ has a noninner automorphism of order $p$. First we may assume that
$n\geq 7$ by \cite{Bodna} and \textsf{GAP}\cite{GAP} when $p=3$. Moreover we have the
following upper central series for $G$:
$$1<Z_1(G)<Z_2(G)< \dots < Z_{n-3}(G)<G$$ which indicates that $p^{n-3} \leq|Z_{n-3}(G)|\leq p^{n-2}$,
$p \leq|Z(G)|\leq p^2$ and $p^2 \leq|Z_2(G)|\leq p^3$. Now by \cite[Theorem (2)]{Shabani}, if $Z_2(G)/Z(G)$ is cyclic, then $G$ has a noninner automorphism of order $p$. Therefore we 
may assume that $|Z(G)|=p$, $|Z_2(G)|=p^3$ and $Z_2(G)/Z(G)\cong C_p\times C_p$. Also by \cite[Theorem (3)]{Shabani}, we deduce that $Z_2(G)\leq Z(\Fr(G))$ and $d(G)=2$ since in other cases
$G$ has a noninner automorphism of order $p$. Now by the above observation we state the following 
remark and we use the assumption and notation of it throughout the paper.

\vspace{0.5cm}
\noindent{\bf Remark. }
 Assume that  $G$ is  a group of order $p^n$ $(n\geq7, p>2)$ and 
coclass 2 with the following properties:
\begin{itemize}
\item[(a)] $|Z(G)|=p$ and $Z_2(G)/Z(G)\cong C_p\times C_p$,
\item[(b)] $Z_2(G)\leq Z(\Fr(G))$ and $d(G)=2$.
\end{itemize}  

\begin{lem}\label{1} We have 
\begin{itemize}
\item[(i)] $G$ is purely non-abelian,
\item[(ii)] $|Z_i(G)|=p^{i+1}$ for $2 \leq i \leq n-3$ and $Z_{n-3}(G)=\Fr(G)$,
\item[(iii)] $\exp(G/Z_{n-4})=p$,
\item[(iv)] $|\Aut_c(G)|=p^2$ and  $\Aut_c(G)\leq \Inn(G)$,
\item[(v)] there exists a normal subgroup $N$ of $G$ such that $N<Z_2(G)$,  
$N\cong C_p\times C_p$ and $\ce_G(N)$ is a maximal subgroup of $G$.
\end{itemize} 
\end{lem}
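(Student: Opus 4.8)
My plan is to treat the five parts in order, using the standing Remark (so $|Z(G)|=p$, $Z_2(G)/Z(G)\cong C_p\times C_p$, $Z_2(G)\le Z(\Fr(G))$, $d(G)=2$, and $G$ has order $p^n$ with $n\ge 7$, coclass $2$).

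For (i), I would argue by contradiction: if $G=A\times B$ with $A$ abelian nontrivial, then since $d(G)=2$ and $G$ is nonabelian one forces $A\cong C_{p^k}$ and $B$ a $2$-generated nonabelian $p$-group; but then $Z(G)=A\times Z(B)$ has order at least $p\cdot|Z(B)|\ge p^2$ (as $Z(B)\ne 1$), contradicting $|Z(G)|=p$. Hence $G$ is purely nonabelian. For (ii), the upper central series displayed in the Remark's surrounding text gives $1<Z_1<\dots<Z_{n-3}<G$ with $n-3$ proper steps below $Z_{n-3}$ and then one more to $G$; so $|Z_{n-3}|\in\{p^{n-3},p^{n-2}\}$. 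Knowing $|Z_1|=p$ and $|Z_2|=p^3$ is the key: the step $Z_1<Z_2$ already has size $p^2$, and every later quotient $Z_{i+1}/Z_i$ has order $\ge p$, so counting forces $|Z_{n-3}|=p^{n-2}$ and $|Z_i|=p^{i+1}$ for all $2\le i\le n-3$. In particular $Z_{n-3}$ has index $p$ in $G$, so it is a maximal subgroup; since $G/Z_{n-3}$ is central in $G/Z_{n-3}$... rather, since $d(G)=2$ the Frattini subgroup $\Fr(G)$ has index $p^2$ — wait, I must be careful: $d(G)=2$ gives $|G/\Fr(G)|=p^2$, so $|\Fr(G)|=p^{n-2}=|Z_{n-3}|$, and since $\Fr(G)\supseteq [G,G]\supseteq$ (the last nontrivial term forces containment the right way) one gets $\Fr(G)\le Z_{n-3}$ hence equality; this is the cleanest route and I'd write it that way. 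Part (iii): $\exp(G/Z_{n-4})=p$ because $|G/Z_{n-4}|=p^3$ and $G/Z_{n-4}$ has center $Z_{n-3}/Z_{n-4}$ of order $p$ with $d=2$, so it is nonabelian of order $p^3$ and exponent $p$ (here $p>2$ is essential — the nonabelian group of order $p^3$ and exponent $p^2$ is excluded because its center-to-group structure would clash, or more directly because $G/Z_{n-4}$ being generated by $2$ elements with this central series is extraspecial-like of exponent $p$).

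For (iv), central automorphisms: $\Aut_c(G)\cong Z^1(G/\Fr(G),Z(G))\cong \Hom(G/\Fr(G),Z(G))$ for a purely nonabelian group — actually $\Aut_c(G)\cong \Hom(G/G'Z(G)^{?},Z(G))$; the standard fact (Adney–Yen) is $|\Aut_c(G)|=|\Hom(G/\Fr(G)G',Z(G)\cap\ldots)|$, but since $d(G)=2$ and $|Z(G)|=p$ one gets $|\Aut_c(G)|=|\Hom(C_p\times C_p, C_p)|=p^2$. To see $\Aut_c(G)\le\Inn(G)$, note $\Inn(G)\cong G/Z(G)$ has order $p^{n-1}$ and $|\Aut_c(G)\cap\Inn(G)|$ corresponds to $Z_2(G)/Z(G)$, which by the Remark has order $p^2$; so $\Aut_c(G)\cap\Inn(G)$ already has order $p^2=|\Aut_c(G)|$, forcing $\Aut_c(G)\le\Inn(G)$. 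Finally (v): I need a normal $N\lhd G$ with $N<Z_2(G)$, $N\cong C_p\times C_p$, and $\ce_G(N)$ maximal. Since $Z_2(G)$ has order $p^3$ and $Z_2(G)/Z(G)\cong C_p\times C_p$, the subgroup $\Omega_1(Z_2(G))$ or a suitable subgroup of order $p^2$ containing $Z(G)$ will be elementary abelian and normal in $G$ (normal because it is characteristic in $Z_2(G)\lhd G$, or because $G$ acts on the $p^2$ subgroups of $Z_2(G)/Z(G)$ and one is fixed). For $\ce_G(N)$: $G/\ce_G(N)$ embeds in $\Aut(N)\cong\GL(2,p)$, but $N\ge Z(G)$ is centralized and $G$ acts trivially on $N/Z(G)\cong C_p$ (since $[G,Z_2(G)]\le Z(G)$), so the action of $G$ on $N$ lands in the unipotent stabilizer, a group of order $p$; hence $|G:\ce_G(N)|\le p$, and it equals $p$ precisely when the action is nontrivial, i.e. when $N\not\le Z(G)$ and $[G,N]\ne 1$ — which holds because $N\le Z_2(G)$ and $N\ne Z(G)$ means $[G,N]=Z(G)\ne 1$. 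So $\ce_G(N)$ is maximal.

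The main obstacle I anticipate is part (v), specifically pinning down the existence of the correct $N$: one must ensure simultaneously that $N$ is \emph{normal} in $G$ (not merely in $Z_2(G)$), that it is elementary abelian of rank exactly $2$, and that the induced action of $G$ on $N$ is nontrivial so that $\ce_G(N)\ne G$. The normality is the delicate point when $Z_2(G)$ itself is not elementary abelian; if $\exp(Z_2(G))=p^2$ then the candidate $\Omega_1(Z_2(G))$ might have rank issues, so I would instead argue that $G$ permutes the $p+1$ subgroups of order $p^2$ of $Z_2(G)$ that contain $Z(G)$, and since there are $p+1$ of them and $G$ is a $p$-group, at least one — in fact at least one that is elementary abelian — is fixed, i.e. normal; combined with $p>2$ guaranteeing enough elementary abelian subgroups, this yields the desired $N$. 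The rest of the lemma is bookkeeping with orders and standard facts about central automorphisms, which I would dispatch quickly.
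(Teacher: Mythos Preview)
Your arguments for (i), (iv) and (v) are correct and essentially the same as the paper's; you actually supply more detail than the paper does (e.g.\ the unipotent--action reason that $|G:\ce_G(N)|=p$, where the paper just writes $G/\ce_G(N)\hookrightarrow \GL(2,p)$ and stops). Your sketch for (ii) is in the right direction, though the line ``$\Fr(G)\supseteq[G,G]\supseteq\dots$ one gets $\Fr(G)\le Z_{n-3}$'' is muddled; the clean way is to note that $G/Z_{n-3}$ has order $p^2$ and cannot be cyclic (else $G/Z_{n-4}$ would be abelian, forcing $Z_{n-3}=G$), hence $G/Z_{n-3}\cong C_p\times C_p$ and $\Fr(G)\le Z_{n-3}$, with equality by orders.

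The genuine gap is in (iii). Your claim that a nonabelian group of order $p^3$ with centre of order $p$ and $d=2$ must have exponent $p$ is false: for odd $p$ there are exactly two nonabelian groups of order $p^3$, the Heisenberg group of exponent $p$ and the modular group $\langle x,y\mid x^{p^2}=y^p=1,\ [x,y]=x^p\rangle$ of exponent $p^2$, and \emph{both} are $2$-generated with centre of order $p$. So nothing you have written rules out $G/Z_{n-4}$ being the second one. The paper does not argue at the level of $G/Z_{n-4}$ alone; it passes to $G_1=G/Z_2$, which is of maximal class and order $p^{n-3}$, then to $G_2=G_1/\gamma_4(G_1)$, a maximal class group of order $p^4$, and invokes Blackburn's Theorem~3.2 (valid since $4\le p+1$) to get $\exp(G_2/\gamma_3(G_2))=p$. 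Finally $\gamma_3(G_1)=Z_{n-4}/Z_2$, so $G_2/\gamma_3(G_2)\cong G/Z_{n-4}$. You need this (or an equivalent appeal to the structure of small maximal class $p$-groups) to close the argument; the purely local information about $G/Z_{n-4}$ is not enough.
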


\begin{proof}
(i) and (ii) are obvious by Remark.\\
(iii) We set $G_{1}=G/Z_2$ and $G_{2}=G_{1}/\Gamma_{4}(G_1)$. Then
$G_1$ and $G_2$ are both of maximal class having order $p^{n-3}$
and $p^4$, respectively. Since $p+1 \geq 4$ by \break [\ref{Bl},
Theorem 3.2], $\ex(G_2/\Gamma_{3}(G_2))=p.$ However, since
$\Gamma_{3}(G_2)=\Gamma_{3}(G_1)/\Gamma_{4}(G_1),$
$$G_2/\Gamma_{3}(G_2)\cong G_1/\Gamma_{3}(G_1)=(G/Z_2)/(Z_{n-4}/Z_2),$$
completing the proof.\\
(iv) This follows  from (i), \cite[Theorem 1]{Ad} and the fact that  $\Aut_c(G)\cap \Inn(G)=Z(\Inn(G)).$ \\
(v) By the Remark, we see that $Z_2(G)$ is a non-cyclic abelian group of order $p^3$. If
$ Z_2(G) \cong C_p\times C_{p}\times C_{p}$  then we may choose $N$ such that $N/Z(G)$ is a subgroup
of order $p$ in 
$Z_2(G)/Z(G)$  and we set $N=\Omega_1(Z_2(G))$ if  $Z_2(G)\cong C_p^{2}\times C_{p}$. Moreover $G/\ce_{G}(N)\hookrightarrow GL(2,p)$, which completes the proof.
\end{proof}

\begin{lem}\label{5}
Let $b \in G\setminus C_{G}(N)$, $a \in C_{G}(N)\setminus\Phi(G)$ and $n \in N\setminus Z(G)$,
where $N$ is defined in Lemma $\ref{1}(v)$. 
Then 
\begin{itemize}
\item[(i)] $G=\la a, b \ra$,
\item[(ii)] $[a^r, b^s]\equiv [a,b]^{rs}$ $\pmod {Z_{n-4}}$, where $r$ and $s$ are integers,
\item[(iii)] the map $\alpha$ defined by $\alpha(Nfa^jb^i)=n^i[n,b]^{i(i-1)/2}$ is a derivation 
from $G/N$ to $N$, where $f\in \Fr(G)$ and $ i, j \in \Z$, 
\item[(iv)] the map $\beta$ defined by $\beta(Nx[a,b]^ta^jb^i)=n^j[n,b]^{ij+t}$ is a derivation 
from $G/N$ to $N$, where $x\in Z_{n-4}$ and $i, j, t \in \Z$.
\end{itemize} 
\end{lem}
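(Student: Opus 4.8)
The plan is to verify each of the four claims by direct computation, leaning on the structural facts established in Lemma \ref{1} and the Remark. Throughout, recall that $N \cong C_p \times C_p$ is normal in $G$ with $N < Z_2(G)$ and $\ce_G(N)$ a maximal subgroup; since $d(G)=2$, the maximal subgroup $\ce_G(N)$ equals $\Phi(G)$ only if... no: $a \in \ce_G(N)\setminus\Phi(G)$ shows $\ce_G(N) \neq \Phi(G)$, so $\ce_G(N)$ is a maximal subgroup distinct from the (unique-up-to-conjugacy structure of) Frattini quotient. For (i), since $b \notin \ce_G(N) \supseteq \Phi(G)$ and $a \notin \Phi(G)$, the images of $a$ and $b$ in $G/\Phi(G) \cong C_p \times C_p$ must be independent: $b \notin \ce_G(N)$ while $a \in \ce_G(N)$, so $\bar a \neq \bar b$ and neither is trivial, hence they generate the $2$-dimensional space $G/\Phi(G)$, and by the Burnside basis theorem $G = \la a, b\ra$. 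For (ii), I would argue that the commutator map is ``bilinear mod a sufficiently deep term'': since $G/Z_2$ has maximal class and $\exp(G/Z_{n-4}) = p$ by Lemma \ref{1}(iii), and since $[G,G] \le Z_{n-3} = \Phi(G)$ while $[[G,G],G] \le Z_{n-4}$, standard commutator collection (Hall--Petrescu) gives $[a^r,b^s] \equiv [a,b]^{rs} \pmod{Z_{n-4}}$; the point is that all correction terms are iterated commutators of weight $\ge 3$, hence lie in $\ga_3(G) \le Z_{n-4}$, and the $p$-th power corrections vanish because $\exp(G/Z_{n-4})=p$.

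For (iii) and (iv), the natural approach is to first check that the proposed formulas are \emph{well-defined} functions on $G/N$, and then verify the derivation identity $\alpha(Ng_1 g_2) = \alpha(Ng_1)^{g_2}\,\alpha(Ng_2)$ (written multiplicatively in $N$, with the given action of $G/N$ on $N$). For well-definedness one must show the right-hand side depends only on the coset $Nfa^jb^i$, i.e. is unchanged when $f$ ranges over $\Phi(G)$ (resp. $x$ over $Z_{n-4}$) and when the exponents $i,j$ change by multiples of the relevant orders; here one uses that $n^i[n,b]^{i(i-1)/2}$ only involves $n \in N$ and $[n,b] \in N$ (since $N \le Z_2(G)$ forces $[N,G] \le Z(G) \le N$), together with the fact that $N \le Z(\Phi(G))$ — wait, more precisely $Z_2(G) \le Z(\Phi(G))$ from Remark(b), so $\Phi(G)$ centralizes $N$, which is exactly why the $f$ (resp. $x$) contributes nothing. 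The orders: $n$ has order $p$ mod $Z(G)$ but $[n,b]$ may be a generator of $Z(G)$, so one checks $n^p \in \la [n,b]\ra$ appropriately and that $i \mapsto i(i-1)/2$ is compatible mod $p$ (this is where $p > 2$ is essential, since $2$ is invertible).

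For the cocycle identity itself, I would write a general element of $G$ as $x[a,b]^t a^j b^i$ with $x \in Z_{n-4}$ (legitimate because $G/Z_{n-4}$ is generated by the images of $a,b$ with $[a,b]$ capturing the commutator, and $\exp(G/Z_{n-4})=p$), multiply two such elements, and collect: the key computational input is part (ii), which says commutators of powers of $a$ and $b$ behave bilinearly mod $Z_{n-4}$, plus the identities $[a,b]^a = [a,b] \cdot (\text{something in } Z_{n-4})$ and the action $n^b = n[n,b]$, $n^a = n$. Then $\alpha$ and $\beta$ read off, respectively, the ``$b$-part'' and the ``$a$-part together with the $[a,b]$-part'' of the product, and the derivation identity becomes a bookkeeping identity in the exponents, the nontrivial pieces being $i(i-1)/2$ for $\alpha$ (the standard ``binomial'' cocycle on a cyclic group) and $ij + t$ for $\beta$. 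The main obstacle I anticipate is the well-definedness in (iii)--(iv): one must be careful that the chosen normal form $x[a,b]^t a^j b^i$ is genuinely unique modulo $N$ after fixing representatives, and that replacing $x$ by $xz$ with $z \in Z_{n-4}$, or $[a,b]$ by $[a,b]$ times a deep commutator, does not change the $N$-valued output — this forces repeated use of $Z_2(G) \le Z(\Phi(G))$ and $Z_{n-4} \ge \ga_3(G)$, and it is here, rather than in the cocycle identity, that the hypotheses of the Remark are really doing the work.
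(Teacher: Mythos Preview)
Your proposal is correct and follows essentially the same approach as the paper: Burnside basis for (i), commutator collection modulo $\gamma_3(G)\le Z_{n-4}$ for (ii) (the paper does this by explicit induction on $r$ and $s$ rather than invoking Hall--Petrescu, but the content is the same), and for (iii)--(iv) first well-definedness of the normal forms $fa^jb^i$ and $x[a,b]^ta^jb^i$, then verification of the cocycle identity using (ii) and the action $n^b=n[n,b]$, $n^a=n$. One small clarification: since $N\cong C_p\times C_p$, the element $n$ has order exactly $p$ (not merely order $p$ modulo $Z(G)$), so the well-definedness check in (iii) reduces immediately to the observation that $i\mapsto n^i[n,b]^{i(i-1)/2}$ depends only on $i\bmod p$, and it is precisely here that $p>2$ is used.
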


\begin{proof}
(i) This is clear.\\
(ii) First assume that $r$ and $s$ are positive. By using induction on $r$ we see that 
$[a^r, b]\equiv [a,b]^{r}$ $\pmod {Z_{n-4}}$ 
since $[a^{r+1}, b]= [a^r, [a,b]^{-1}][a,b][a^r,b]$ and $[a^r, [a,b]^{-1}]\in Z_{n-4}$. Hence by using induction on $s$, we have
$[a^r, b^s]\equiv [a,b]^{rs}$ $\pmod {Z_{n-4}}$. The rest follows from 
$[a^{-r}, b^{-s}]=[b^{-s}a^{-r}, [a^r,b^s]^{-1}][a^r,b^s]$. \\
(iii) Since $|G/\ce_G(N)|=|\ce_G(N)/\Fr(G)|=p$, any element of $G$ is written as $fa^jb^i$, 
where $f\in \Fr(G)$ and $i, j \in \Z $. First we prove that $\alpha$ is well defined.  To see this let 
$g_1=f_1a^{j_1}b^{i_1}$ and $g_2=f_2a^{j_2}b^{i_2}$. If $Ng_1=Ng_2$, then $\ce_G(N)g_1=\ce_G(N)g_2$
and so $b^{i_2-i_1}\in \ce_G(N)$ which implies that $i_2=i_1+kp$ for some $k \in \Z$. Therefore 
$\alpha(Ng_2)=\alpha(Ng_1)$ since  $|n|=|[n,b]|=p$  and $p$ is odd.
 Now we have
${\alpha(Ng_1)}^{g_2}\alpha(Ng_2)=(n^{i_1})^{b^{i_2}}n^{i_2}[n,b]^{\frac{i_1(i_1-1)+i_2(i_2-1)}{2}}$ and $(n^{i_1})^{b^{i_2}}=n^{i_1}[n,b]^{i_1i_2}$ since $f_2a^{j_2} \in \ce_G(N)$ and $[n,b] \in Z(G)$. Hence ${\alpha(Ng_1)}^{g_2}\alpha(Ng_2)=n^{i_1+i_2}[n,b]^{\frac{(i_1+i_2-1)(i_1+i_2)}{2}}$. Moreover
$g_1g_2\equiv a^{j_1+j_2}b^{i_1+i_2}$ $\pmod {\Fr(G)}$ which completes the proof.\\
(iv) Since $|\Fr(G)/Z_{n-4}|=p$ and $[a, b] \in \Phi(G)\setminus Z_{n-4}$, any element of $G$ is determined by $x[a,b]^t a^jb^i$, where
$x \in Z_{n-4}$ and $ i,j,t \in \Z.$  First we prove that $\beta$ is well defined. To see this let  
$g_1=x_1[a,b]^{t_1}a^{j_1}b^{i_1}$ and $g_2=x_2[a,b]^{t_2}a^{j_2}b^{i_2}$. If $Ng_1=Ng_2$, then $b^{i_2-i_1}\in \ce_G(N)$ and so 
$i_2=i_1+kp$ for some $k \in \Z$. This implies that $j_2=j_1+lp$ for some $l \in \Z$ since  $\Phi(G)g_1=\Phi(G)g_2$. Therefore we see that  $t_2=t_1+up$ for some $u \in \Z$ by the fact that 
$Z_{n-4}g_1=Z_{n-4}g_2$   and Lemma \ref{1}(iii). Hence  $\beta$ is well defined.
Now we have 
${\beta(Ng_1)}^{g_2}\beta(Ng_2)=n^{j_1+j_2}[n,b]^{i_{1}j_1+t_1+i_{2}j_2+t_2+j_{1}i_2}$ since
 $x_2[a,b]^{t_2}a^{j_2} \in \ce_G(N)$ and $[n,b] \in Z(G)$. 
Moreover $Z(G/Z_{n-4})=\Fr(G)/Z_{n-4}$ by Lemma \ref{1}(ii), which yields that $g_1g_2\equiv [a,b]^{t_1+t_2}a^{j_1}b^{i_1}a^{j_2}b^{i_2}$ $\pmod {Z_{n-4}}$. Moreover
$a^{j_1}b^{i_1}a^{j_2}b^{i_2}=a^{j_1+j_2}[a^{j_2},b^{-i_1}]b^{i_1+i_2}$. Therefore
$g_1g_2\equiv [a,b]^{t_1+t_2-i_1j_2}a^{j_1+j_2}b^{i_1+i_2}$ $\pmod {Z_{n-4}}$ by (ii). Consequently 
$\beta(Ng_1Ng_2)={\beta(Ng_1)}^{g_2}\beta(Ng_2)$.
\end{proof}

We use the following theorem to complete our proof.

\begin{thm}\label{4}
Suppose that $N$ is a normal subgroup of a group $G$. Then there is a natural isomorphism
$\varphi : Z^1(G/N, Z(N))\rightarrow \Aut_{N}^N(G)$ given by $g^{\varphi(\gamma)}=g\gamma(Ng)$ for 
$g \in G$ and $\gamma \in Z^1(G/N, Z(N))$.
\end{thm}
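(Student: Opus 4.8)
The plan is to construct the isomorphism $\varphi$ directly and check three things: that $\varphi(\gamma)$ is a genuine automorphism of $G$ lying in $\Aut_N^N(G)$, that $\varphi$ is a homomorphism from the abelian group $Z^1(G/N,Z(N))$ to $\Aut_N^N(G)$, and that it is bijective. First I would fix $\gamma \in Z^1(G/N, Z(N))$ and define $\sigma = \varphi(\gamma)$ by $g^\sigma = g\,\gamma(Ng)$. To see $\sigma$ is an endomorphism, compute $(gh)^\sigma = gh\,\gamma(Ngh)$ and compare with $g^\sigma h^\sigma = g\gamma(Ng)h\gamma(Nh) = gh\,(\gamma(Ng))^h \gamma(Nh)$; the derivation identity $\gamma(Ngh) = \gamma(Ng)^{Nh}\gamma(Nh) = (\gamma(Ng))^h \gamma(Nh)$ (using the stated action $a^{Ng}=a^g$) makes these agree, where one uses that $\gamma$ takes values in $Z(N)$ so the products make sense and the conjugation by $h$ is the prescribed module action. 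Since $\gamma(Ng) \in Z(N) \subseteq N$, we have $g^\sigma \equiv g \pmod N$, so $\sigma$ centralizes $G/N$ and fixes $N$ pointwise (for $g \in N$, $Ng = N$ is the identity of $G/N$, and any derivation sends the identity to $1$, so $g^\sigma = g$); thus $\sigma \in \Aut_N^N(G)$ once we know it is bijective. Bijectivity of $\sigma$ follows because $G$ is finite and $\sigma$ is injective: if $g^\sigma = 1$ then $g = \gamma(Ng)^{-1} \in N$, forcing $Ng = N$ and $g = g^\sigma = 1$; alternatively it follows once $\varphi$ is shown to be a homomorphism, since then $\varphi(-\gamma)$ is a two-sided inverse.

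Next I would verify $\varphi$ is a homomorphism. For $\gamma, \delta \in Z^1(G/N,Z(N))$, the sum $\gamma+\delta$ is the pointwise product in $Z(N)$, i.e. $(\gamma+\delta)(Ng) = \gamma(Ng)\delta(Ng)$. Then $g^{\varphi(\gamma)\varphi(\delta)} = (g\gamma(Ng))^{\varphi(\delta)} = g\gamma(Ng)\,\delta(N g\gamma(Ng))$, and since $\gamma(Ng)\in N$ we have $Ng\gamma(Ng) = Ng$, so this equals $g\,\gamma(Ng)\delta(Ng) = g^{\varphi(\gamma+\delta)}$. Hence $\varphi(\gamma)\varphi(\delta) = \varphi(\gamma+\delta)$, and in particular the image is a subgroup and $\varphi$ is injective (its kernel consists of $\gamma$ with $g\gamma(Ng)=g$ for all $g$, i.e. $\gamma = 0$).

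Finally I would prove surjectivity. Given $\sigma \in \Aut_N^N(G)$, define $\gamma_\sigma(Ng) = g^{-1}g^\sigma$. Because $\sigma$ centralizes $G/N$, $g^{-1}g^\sigma \in N$; moreover $g^{-1}g^\sigma \in Z(N)$ since for any $m \in N$ we have $m^\sigma = m$ (as $\sigma$ fixes $N$ pointwise), and conjugating the relation $g^\sigma = g\gamma_\sigma(Ng)$ appropriately — more precisely, $m = m^\sigma = (g^{-1}(gm)) ^\sigma$-type manipulations, using that $gmg^{-1}\in N$ is also fixed — forces $\gamma_\sigma(Ng)$ to commute with $N$; I would spell out this centrality check carefully as it is the one genuinely fiddly point. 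Well-definedness (independence of the coset representative) uses that $\sigma$ fixes $N$ pointwise: if $n \in N$ then $(gn)^{-1}(gn)^\sigma = n^{-1}g^{-1}g^\sigma n^\sigma = n^{-1}(g^{-1}g^\sigma)n = g^{-1}g^\sigma$, the last step because $g^{-1}g^\sigma \in Z(N)$. The cocycle condition $\gamma_\sigma(Ngh) = \gamma_\sigma(Ng)^h\,\gamma_\sigma(Nh)$ is a direct computation: $(gh)^{-1}(gh)^\sigma = h^{-1}(g^{-1}g^\sigma)h\cdot h^{-1}h^\sigma$. Then $\varphi(\gamma_\sigma) = \sigma$ by construction. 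The main obstacle I anticipate is not any single step but the bookkeeping around where values actually land — confirming $\gamma_\sigma(Ng)\in Z(N)$ rather than merely in $N$, and checking the module action $a^{Ng}=a^g$ is compatible at each use — so I would present those verifications in full while treating the associativity-style manipulations as routine.
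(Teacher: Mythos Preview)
Your proof is correct. The paper itself does not prove this theorem at all: its entire proof reads ``See for example \cite[Result 1.1]{Schmid}.'' So you have supplied a full direct verification of a result the authors simply quote.

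What you do is exactly the standard argument behind that citation: check that $g\mapsto g\gamma(Ng)$ is an endomorphism via the cocycle identity, that it fixes $N$ pointwise and induces the identity on $G/N$, that $\varphi$ is multiplicative, and construct the inverse $\sigma\mapsto\gamma_\sigma$ with $\gamma_\sigma(Ng)=g^{-1}g^\sigma$. The one point you rightly single out as fiddly---that $g^{-1}g^\sigma$ lands in $Z(N)$ and not merely in $N$---follows cleanly from applying $\sigma$ to $gmg^{-1}\in N$ for $m\in N$, yielding $gmg^{-1}=g^\sigma m(g^\sigma)^{-1}$ and hence $[g^{-1}g^\sigma,m]=1$. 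One small remark: your first argument for bijectivity of $\sigma$ invokes finiteness of $G$, which the theorem as stated does not assume; the alternative you mention, that $\varphi(-\gamma)$ furnishes a two-sided inverse once $\varphi$ is known to be a homomorphism, is the correct route in general and makes the finiteness hypothesis unnecessary.
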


\begin{proof}
See for example \cite[Result 1.1]{Schmid}.
\end{proof}

\begin{cor}\label{6}
By the assumption of Lemma $\ref{5}$, the maps $\alpha^*$ and $\beta^*$ defined by $a^{\alpha^*}=a$, $b^{\alpha^*}=bn$ and
$a^{\beta^*}=an$, $b^{\beta^*}=b$ are noncentral automorphisms of order $p$ lying in $\Aut_{N}^N(G)$.
\end{cor}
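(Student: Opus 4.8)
The plan is to build the automorphisms $\alpha^*$ and $\beta^*$ directly out of the derivations $\alpha$ and $\beta$ constructed in Lemma \ref{5} via the isomorphism $\varphi$ of Theorem \ref{4}, applied with the normal subgroup $N$ from Lemma \ref{1}(v). Since $N\cong C_p\times C_p$ is abelian we have $Z(N)=N$, so $\varphi$ identifies $Z^1(G/N,N)$ with $\Aut_N^N(G)$, and I set $\alpha^*=\varphi(\alpha)$ and $\beta^*=\varphi(\beta)$. By the defining formula $g^{\varphi(\gamma)}=g\,\gamma(Ng)$ I first check that these have the claimed action on the generators $a,b$ of $G$ (recall $G=\langle a,b\rangle$ by Lemma \ref{5}(i)): for $\alpha$ one takes the element $a=a^1b^0$ (so $i=0,j=1$, $f=1$), giving $\alpha(Na)=n^0[n,b]^0=1$, hence $a^{\alpha^*}=a$; and $b=a^0b^1$ (so $i=1,j=0$), giving $\alpha(Nb)=n^1[n,b]^0=n$, hence $b^{\alpha^*}=bn$. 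Similarly, writing $a$ and $b$ in the normal form $x[a,b]^t a^jb^i$ used for $\beta$ (with $x=1,t=0$), one gets $\beta(Na)=n^1[n,b]^0=n$ and $\beta(Nb)=n^0[n,b]^0=1$, so $a^{\beta^*}=an$, $b^{\beta^*}=b$, as asserted. Thus $\alpha^*,\beta^*\in\Aut_N^N(G)$, and in particular they centralize $N$ and hence fix $\Phi(G)$ elementwise only after we confirm order $p$; note $\Aut_N^N(G)$ already guarantees they centralize $G/N$ and normalize $N$.

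Next I verify that $\alpha^*$ and $\beta^*$ have order exactly $p$. For the order, I iterate: $b^{(\alpha^*)^k}=b\,n^k$ by induction, using that $n\in N\subseteq\ce_G(N)$ and $\alpha^*$ fixes $N$ pointwise, together with $\alpha^*$ acting trivially on $a$; since $|n|=p$ this gives $(\alpha^*)^p=1$ and $(\alpha^*)^k\neq 1$ for $0<k<p$. The same computation with $a^{(\beta^*)^k}=a\,n^k$ handles $\beta^*$. (Equivalently one can invoke the naturality of $\varphi$: $\varphi$ is a group isomorphism from the additive group $Z^1(G/N,N)$, and since $N$ has exponent $p$ every nonzero derivation into $N$ has additive order $p$; one must only check $\alpha$ and $\beta$ are not the zero derivation, which is immediate since $\alpha(Nb)=n\neq 1$ and $\beta(Na)=n\neq 1$ with $n\notin Z(G)$, in particular $n\neq 1$.)

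It remains to show $\alpha^*$ and $\beta^*$ are noncentral, i.e.\ do not lie in $\Aut_c(G)$. Here I use Lemma \ref{1}(iv): $\Aut_c(G)\leq\Inn(G)$ and $|\Aut_c(G)|=p^2$, so a central automorphism of order $p$ is an inner automorphism induced by an element of order dividing $p$ modulo $Z(G)$; more usefully, a central automorphism must send each generator $g$ to $gz$ with $z\in Z(G)$. But $b^{\alpha^*}=bn$ with $n\in N\setminus Z(G)$, so $n\notin Z(G)$ and $\alpha^*$ moves $b$ outside the coset $bZ(G)$; hence $\alpha^*\notin\Aut_c(G)$. Identically, $a^{\beta^*}=an$ with $n\notin Z(G)$ shows $\beta^*\notin\Aut_c(G)$. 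This completes the argument.

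The main obstacle I anticipate is purely bookkeeping rather than conceptual: one must be careful that the two derivations $\alpha$ and $\beta$ are defined using two \emph{different} normal forms for elements of $G$ (the form $Nfa^jb^i$ with $f\in\Phi(G)$ for $\alpha$, versus $Nx[a,b]^ta^jb^i$ with $x\in Z_{n-4}$ for $\beta$), so when reading off the values $\alpha(Na),\alpha(Nb),\beta(Na),\beta(Nb)$ I must express $a$ and $b$ in the correct normal form each time and track the exponents $i,j,t$ accurately. Since $a\notin\Phi(G)$ and $b\notin\ce_G(N)\supseteq\Phi(G)$, both $a$ and $b$ are genuine ``generator-level'' elements and their normal forms have $f=1$ (resp.\ $x=1,t=0$), so the values come out as the simple powers of $n$ and $[n,b]$ claimed above; still, this is the step where a sign or exponent error would creep in. Everything else — that $\varphi$ lands in $\Aut_N^N(G)$, that it is a homomorphism, that exponent-$p$ derivations give order-$p$ automorphisms — is handed to us by Theorem \ref{4} and Lemma \ref{5}.
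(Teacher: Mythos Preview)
Your proposal is correct and follows exactly the approach the paper intends: the paper's own proof of this corollary is simply ``This is obvious by Lemma~\ref{5} and Theorem~\ref{4}'', and what you have written is precisely the unpacking of that one line---define $\alpha^*=\varphi(\alpha)$, $\beta^*=\varphi(\beta)$, read off the images of $a$ and $b$ from the derivation formulas, and check order~$p$ and noncentrality. Your computations of $\alpha(Na),\alpha(Nb),\beta(Na),\beta(Nb)$ and the resulting actions on the generators are all correct (aside from one garbled clause about fixing $\Phi(G)$ that you can simply delete), so there is nothing to add.
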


\begin{proof}
This is obvious by Lemma \ref{5} and Theorem \ref{4}.
\end{proof}

Now we deduce our main theorem: 

\begin{thm}\label{7}
Let $G$ be a finite $p$-group of coclass $2$ with $p>2$. Then $G$ has a noninner automorphism 
of order $p$. Moreover this noninner automorphism leaves either $\Fr(G)$ or $Z_{n-4}$
elementwise fixed when $n\geq 7$. 
\end{thm}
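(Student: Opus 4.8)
The plan is to reduce to the situation described in the Remark and then play the two explicitly constructed automorphisms $\alpha^*$ and $\beta^*$ against $\Inn(G)$. First I would dispose of the small cases: if $p=3$ invoke \cite{Bodna} and \textsf{GAP}\cite{GAP} to assume $n\geq 7$, and in general use \cite[Theorem (2),(3)]{Shabani} so that either $G$ already has a noninner automorphism of order $p$ or else the hypotheses (a)--(b) of the Remark hold. Under those hypotheses Lemmas \ref{1} and \ref{5} apply, so Corollary \ref{6} gives two noncentral automorphisms $\alpha^*,\beta^*$ of order $p$ lying in $\Aut_N^N(G)$, where $N\leq Z_2(G)$ with $N\cong C_p\times C_p$ and $\ce_G(N)$ maximal in $G$; note $\alpha^*$ fixes $\Fr(G)=Z_{n-3}$ elementwise (it is the image under $\varphi$ of the derivation $\alpha$, which kills $N\Fr(G)$) and $\beta^*$ fixes $Z_{n-4}$ elementwise (the derivation $\beta$ kills $NZ_{n-4}$).

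The heart of the argument is to show that at least one of $\alpha^*$, $\beta^*$ is noninner. So suppose for contradiction that both are inner, say $\alpha^*=$ conjugation by $c$ and $\beta^*=$ conjugation by $d$. Since $\alpha^*$ centralizes $N$ and fixes $\Fr(G)$ pointwise, $c\in\ce_G(N)\cap\ce_G(\Fr(G))$; by Lemma \ref{1}(ii), $\Fr(G)=Z_{n-3}$, so $\ce_G(Z_{n-3})$ is a proper subgroup — here is where the coclass-$2$ structure bites, since $\ce_G(Z(\Fr(G)))=\Fr(G)$ is forced by the Deaconescu--Silberberg dichotomy quoted in the introduction (otherwise we are already done). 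Thus $c\in\Fr(G)$, and similarly, using that $\beta^*$ fixes $Z_{n-4}$ pointwise and centralizes $N$, one pins $d$ down to lie in $\ce_G(Z_{n-4})$, which I expect to again equal $\Fr(G)$ after a short computation with the upper central series and part (iii) of Lemma \ref{1} ($\exp(G/Z_{n-4})=p$). Now write $c\equiv a^{j_0}b^{i_0}\pmod{\Phi(G)}$ with not both $i_0,j_0\equiv 0$; evaluating $\alpha^*$ versus conjugation-by-$c$ on the generators $a$ and $b$ and reading the results modulo $Z_{n-4}$ using Lemma \ref{5}(ii) gives $[a,c]\equiv 1$ and $[b,c]\equiv n\pmod{Z_{n-4}}$, which combined with $c\in\Phi(G)$ and the formula $[x,b]$ for $x\in\Phi(G)$ forces a contradiction (e.g. $n\in Z_{n-4}$, contradicting $n\in N\setminus Z(G)$ together with $N\cap Z_{n-4}=Z(G)$, which holds because $N\leq Z_2(G)$ and $Z_2(G)\cap Z_{n-4}=Z(G)$ when $n\geq 7$). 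Running the same comparison for $\beta^*$ and $d$ yields the parallel contradiction.

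The main obstacle I anticipate is the bookkeeping in that final comparison: showing that an element $c\in\Phi(G)$ conjugating like $\alpha^*$ cannot exist requires knowing $[\Phi(G),b]$ precisely enough modulo $Z_{n-4}$, and one must be careful that $n\notin Z_{n-4}$, i.e. that the subgroup $N$ genuinely projects nontrivially into $G/Z_{n-4}$; this is where Lemma \ref{1}(ii) (the exact orders $|Z_i(G)|=p^{i+1}$) and the choice of $N$ inside $Z_2(G)$ in Lemma \ref{1}(v) are essential, since $N\cap Z_{n-4}=N\cap Z_{n-4}\cap Z_2(G)$ has order $p$. Once that is nailed down, the contradiction is immediate and the theorem follows: at least one of $\alpha^*,\beta^*$ is a noninner automorphism of order $p$, fixing $\Fr(G)$ respectively $Z_{n-4}$ elementwise, which is exactly the asserted ``moreover'' clause for $n\geq 7$; the remaining cases were already handled at the start.
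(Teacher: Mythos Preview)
Your reduction and setup match the paper exactly, but the endgame has a genuine gap. The claim ``$Z_2(G)\cap Z_{n-4}=Z(G)$ when $n\geq 7$'' is false: the upper central series is nested, so for $n\geq 6$ one has $Z_2\leq Z_{n-4}$ and hence $N\leq Z_2\leq Z_{n-4}$. Thus $n\in Z_{n-4}$ is automatic, and the ``contradiction'' you derive from $[b,c]=n$ with $c\in\Phi(G)$ simply isn't one. More broadly, your strategy of showing that \emph{each} of $\alpha^*$ and $\beta^*$ is individually noninner is stronger than what is claimed (and is not obviously true); the paper only shows that at least one of them is noninner.

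The paper's argument runs instead through a size bound on $\Aut_N^N(G)\cap\Inn(G)$. Since $N\leq Z_2$, one has $\Aut_N^N(G)\leq\Aut^{Z_2}(G)$, and $\Aut^{Z_2}(G)\cap\Inn(G)\cong Z_3/Z$, which has order $p^3$ by Lemma~\ref{1}(ii). On the other hand $\Aut_c(G)\leq\Aut_N^N(G)\cap\Inn(G)$ (every central automorphism is inner by Lemma~\ref{1}(iv), hence conjugation by some $g\in Z_2\leq\ce_G(N)$), and $|\Aut_c(G)|=p^2$. So if $\alpha^*$ is inner, then since $\alpha^*$ is noncentral one gets $\Aut_N^N(G)\cap\Inn(G)=\Aut_c(G)\langle\alpha^*\rangle$, of order $p^3$. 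If now $\beta^*$ were also inner it would lie in $\Aut_c(G)\langle\alpha^*\rangle$; but every element of that group sends $a$ into $aZ(G)$ (since $\alpha^*$ fixes $a$ and central automorphisms move $a$ by a central element), whereas $a^{\beta^*}=an$ with $n\notin Z(G)$. This is the contradiction you need, and it does not require locating $c$ or $d$ inside $\Phi(G)$ at all.
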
 

\begin{proof}
First we note that  $n\geq7$ by \cite{Bodna} and \cite{GAP}. Moreover we may assume that $G$ satisfies the conditions of the Remark by \cite[Theorem]{Shabani} and so we use the notation and assumption of the above lemmas. We have   
$\Aut_c(G) \leq \Aut_{N}^N(G)$ since if $\gamma \in \Aut_c(G)$, then $\gamma=i_{g}$
for some $g\in G$ by Lemma \ref{1}(iv) which implies that $g \in Z_2$ and so $\gamma \in \Aut_N(G)$. Therefore 
$\Aut_c(G) \leq \Aut_{N}^N(G)\cap \Inn(G)\leq \Aut^{Z_{2}}(G)\cap\Inn(G)\cong Z_{3}(G)/Z(G).$
Hence by Corollary \ref{6}, if $\alpha^* \in \Inn(G)$, then $\Aut_{N}^N(G)\cap \Inn(G)=\Aut_c(G)\la \alpha^* \ra$. Moreover if $\beta^* \in \Inn(G)$, then $\beta^* \in \Aut_c(G)\la \alpha^* \ra$, which is impossible, by considering the image of $\beta^*$ on $a$. Therefore $\alpha^*$ or $\beta^*$ is noninner. Furthermore by Lemma \ref{5}(iii), (iv), we see that $\alpha^*$ leaves
$\Fr(G)$ and $\beta^*$ leaves $Z_{n-4}$ elementwise fixed, as desired.
\end{proof}

%-------------------------------------------------------------------------------------------------------------------------------
 \vspace*{1cm}


\begin{thebibliography}{99}

\bibitem{Abdol1}A. Abdollahi, Finite $p$-groups of class 2 have noninner automorphisms of order $p$, {\it J. Algebra} \textbf{312} (2007), no.2, 876-879. 

\bibitem{Abdol2}A. Abdollahi, M. Ghoraishi and B. Wilkens, Finite $p$-groups of class $3$ have noninner
automorphisms of order p. (to be  appear in {\it Contributions to Algebra and Geometry.})

\bibitem{Abdol3}A. Abdollahi, Powerful $p$-groups have non-inner automorphisms of order $p$ and some 
cohomology, {\it J. Algebra} \textbf{323} (2010), no.3, 779-789.

\bibitem{Ad} J. E. Adney and T. Yen, Automorphisms of a $p$-group, {\it
Illinois Journal of Mathematics }\break {\bf 9} (1965), 137-143.

\bibitem{Bl}\label{Bl} N. Blackburn, On a special class of
$p$-groups, {\it Acta Math.} {\bf 100} (1958), 45-92.

\bibitem{Bodna}L. Yu. Bodnarchuk  and O. S. Pilyavs’ka, On the existence of a noninner automorphism
of order $p$ for $p$ groups, {\it Ukrainian Mathematical Journal,} \textbf{53} (2001), no. 11, 1771-1783.

\bibitem{Deacon} M. Deaconescu and G. Silberberg, Noninner automorphisms of order $p$ of finite $p$-groups, {\it J. Algebra } \textbf{250} (2002), no.1, 283-287.


\bibitem{Gasch} W. Gasch\"{u}tz, Nichtabelsche $p$-gruppen
besitzen aussere $p$-automorphismen, {\it J. Algebra} {\bf 4} (1966), 1-2.


\bibitem{Lieb}H. Liebeck, Outer automorphisms in nilpotent $p$-groups of class 2, {\it J. London Math. Soc.} \textbf{40} (1965), 286-275.

\bibitem{Jamal} A. R. Jamali and M. Viseh, On the existence of noninner automorphisms 
of order two in finite $2$-groups,( to be  appear in {\it Bull. Aust. Math. Soc.})

\bibitem{Mazur}V. D. Mazurov, E. I. Khukhro (Eds.), Unsolved Problems in Group Theory, The Kourovka Notebook, vol. 16, Russian Academy of Sciences, Siberian Division, Institute of Mathematics, Novosibirsk, 2006.  

\bibitem{Schmid} P. Schmid, Normal $p$-subgroups in
the group of outer automorphisms of a finite $p$-group, {\it Math.
Z. }{\bf 147} (1976), no.3, 271-277.

\bibitem{Shabani} M. Shabani Attar, On a conjecture about automorphisms of finite $p$-groups,
{\it Arch. Math.}(Basel) \textbf{93} (2009), no.5, 399-403.

\bibitem{GAP}\label{GAP} The \textsf{GAP} Group. \textsf{GAP} -   {\it Groups, Algorithms, and Programming,}
 Version 4.4.12; (http://www.gap-system.org).

\end{thebibliography}
\end{document}